\documentclass[12pt]{amsart}
\usepackage{array}
\usepackage{amstext}
\usepackage{amsfonts}
\usepackage{amsmath}
\usepackage{amssymb}
\usepackage{mathtools}
\usepackage{color}
\usepackage{enumerate}
\usepackage{filecontents}
\usepackage[colorlinks=true, linkcolor=blue]{hyperref}

\usepackage{cancel}
\usepackage{changes}

\usepackage{tikz}
\usepackage{tikz-cd}
\usetikzlibrary{arrows, positioning}
\usepackage[all]{xy}
\tikzset{Mylong/.style={text width=3.1cm, align=center}, myarr/.style={->, double equal sign distance, -implies}}

\newtheorem{theorem}{Theorem}[section]

\newtheorem{lemma}[theorem]{Lemma}
\newtheorem{proposition}[theorem]{Proposition}

\theoremstyle{definition}

\newtheorem{example}[theorem]{Example}
\theoremstyle{remark}
\newtheorem{remark}[theorem]{Remark}

\title{Conjugacy classes of positive $3$-braids}
\author{Kui-Yo Chen}
\address{Department of Mathematics, National Cheng Kung University, Tainan 701401}
\email{kuiyochen1230@gmail.com}
\subjclass[2020]{20F36,20F05,20F10,57K10}

\keywords{braid groups.conjugacy classes.positive $3$-braids.cyclic equivalence.}
\author{Yat-Hin Suen}
\address{Department of Mathematics, National Cheng Kung University, Tainan 701401}
\email{yhsuen@gs.ncku.edu.tw}

\date{\today}

\begin{document}
\begin{abstract}
The conjugacy problem in braid groups has been extensively studied, particularly from an algorithmic perspective. Established methods based on Garside structures, such as initial summit sets and super summit sets, provide effective procedures for determining whether two braids are conjugate.

In contrast, explicit structural descriptions of conjugacy classes are less frequently addressed. Although cyclic sliding offers a powerful mechanism for navigating distinguished subsets within a conjugacy class, it is well known that conjugate braids cannot, in general, be obtained from one another solely through iterated cyclic sliding.

In this paper, we provide a direct and explicit characterization of the conjugacy classes of positive $3$-braids. Specifically, for any given positive $3$-braid, we determine all of its conjugates in a concrete and closed form.
\end{abstract}

\maketitle

\section{Introduction}

The conjugacy problem in braid groups is a central algorithmic challenge in combinatorial group theory.
Since Garside's foundational work, braid groups have served as a testing ground for developing algebraic and geometric methods to solve conjugacy problems in non-abelian groups.

Let $B_n$ denote the braid group on $n$ strands.
We denote the $n-1$ standard generators of $B_n$ by $\sigma_i$ for $i=1,\dots,n-1$.
The relations between these generators are
\begin{equation}\label{braid_relation1}
\sigma_i\sigma_j\sigma_i=\sigma_j\sigma_i\sigma_j \quad \text{if } |i-j|=1,
\end{equation}
and
\begin{equation}\label{braid_relation2}
\sigma_i\sigma_j=\sigma_j\sigma_i \quad \text{if } |i-j|\ge 2.
\end{equation}



The positive braid monoid $B_n^+$, generated by the same elements and relations, embeds naturally into $B_n$ \cite{The_braid_group_and_other_groups}. Geometrically, conjugacy classes in $B_n$ correspond to isotopy classes of closed braids in the solid torus \cite{Braid_groups}.

Extensive research has focused on effective solutions to the conjugacy problem.
Techniques based on Garside structures led to the development of summit sets, super summit sets, and ultra summit sets \cite{Algorithms_for_positive_braids, The_braid_group_and_other_groups, positive_conjugate}.
Subsequent methods, such as cyclic sliding, further improved algorithmic efficiency and clarified the internal dynamics of conjugacy classes \cite{A_new_approach, Word_processing, Basic_results, cyclic_sliding}.
See also \cite{Quasipositivity}.

Despite these advances, most existing literature emphasizes the decision of conjugacy rather than the explicit structural description of conjugacy classes.
Even for braids with few strands, a direct characterization of all conjugates is not always available.

This paper shifts the focus from decision procedures to structural classification in the case of positive $3$-braids.
While cyclic sliding provides a systematic way to move within certain distinguished subsets of a conjugacy class, it is well understood that conjugate braids are not always connected through iterated cyclic sliding alone.


We introduce the notion of \emph{cyclic equivalence} for positive braids,
which arises from cyclic permutations of braid words.
We show that conjugacy in $B_3^+$ can be characterized by cyclic equivalence and the reflection of braids.
Our main result is as follows:

\begin{theorem}[Main Theorem \ref{main}]
Let $a,b\in B_3^+$.
Then $a$ and $b$ are conjugate if and only if either $a$ and $b$ are cyclic equivalent, or $a$ is cyclic equivalent to the reflection of $b$.

Moreover, if $a$ has positive infimum, then $a$ and $b$ are conjugate if and only if they are cyclic equivalent.
\end{theorem}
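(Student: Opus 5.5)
The plan is to prove the easy direction first and then reduce the hard direction to Garside theory in $B_3$. Throughout I take \emph{cyclic equivalence} on $B_3^+$ to mean the equivalence relation generated by $xy\sim yx$ for $x,y\in B_3^+$. I take the \emph{reflection} of $b$ to mean the image of $b$ under the involution $\sigma_1\leftrightarrow\sigma_2$, which is conjugation by $\Delta=\sigma_1\sigma_2\sigma_1$.

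For the easy direction, note that $yx=x^{-1}(xy)x$, so cyclically equivalent braids are conjugate. Likewise the reflection of $b$ equals $\Delta^{-1}b\Delta$, so it is conjugate to $b$. Hence either condition in the statement implies that $a$ and $b$ are conjugate.

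For the converse, let $a,b\in B_3^+$ be conjugate. I will use the left normal form $a=\Delta^p x_1\cdots x_r$ with $p=\inf(a)\ge 0$. In $B_3$ the simple elements other than $1$ and $\Delta$ are $\sigma_1,\sigma_2,\sigma_1\sigma_2,\sigma_2\sigma_1$, so the normal form of a positive $3$-braid is very explicit: after the $\Delta$-power it is an alternating word in $\sigma_1^{k}$ and $\sigma_2^{l}$ blocks.

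\textbf{Step 1.} Every positive braid is cyclically equivalent to an element of its super summit set, up to reflection. Cycling $\Delta^p x_1\cdots x_r\mapsto \Delta^p x_2\cdots x_r\,\tau^{-p}(x_1)$ is conjugation by $\tau^{p}(x_1)$, where $\tau$ is the reflection. If $p$ is even, cycling is literally a cyclic permutation of the positive word $\Delta^p x_1\cdots x_r$, using $\Delta^2$ central. If $p$ is odd, then $\Delta^p x_1 w = \tau(x_1)\Delta^p w$. Then $\Delta^p w\,\tau(x_1)$ is cyclically equivalent to $\tau(x_1)\Delta^p w$, which equals $a$. So cycling stays inside the cyclic class even for odd $p$.

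Decycling, conjugation by $x_r$, is a genuine cyclic permutation moving $x_r$ to the front. By the Elrifai–Morton theorem, iterated cycling and decycling reach the super summit set. Hence $a$ and $b$ are each cyclically equivalent to elements $a',b'$ of $SSS(a)$.

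\textbf{Step 2.} Classify $SSS$ in $B_3$ and connect its elements. By the convexity results of El-Rifai–Morton/Franco–González-Meneses, any two elements of $SSS$ are joined by a chain of conjugations by simple elements that stay in $SSS$. The simple conjugators are $\sigma_i$, $\sigma_i\sigma_j$ and $\Delta$.

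I will show that conjugation by $\sigma_i$ or $\sigma_i\sigma_j$ between two super summit elements is realized by a cyclic permutation, possibly composed with $\tau$ when $p$ is odd. Concretely, if $c$ and $s^{-1}cs$ are both positive with the same $\inf$ and $\sup$, then $s$ is a prefix of $c$ or of $\Delta^{p}$-shifted data. This is checked by hand using the explicit alternating normal forms. Conjugation by $\Delta$ is exactly the reflection. Moreover, when $p\ge1$ we have $\Delta^{-1}c\Delta=\tau(c)$, and $c=\Delta\,c_0$ is cyclically equivalent to $c_0\Delta=\Delta\,\tau(c_0)$, which is $\tau(c)$ up to powers of $\Delta^2$. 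So for positive infimum the reflection is absorbed into cyclic equivalence, giving the second statement.

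\textbf{Step 3.} Treat the degenerate cases, where $\inf=0$ and $\sup\le1$, by hand. These are the powers $\sigma_1^k,\sigma_2^k$ and words like $\sigma_1\sigma_2$. Here $\sigma_1^k$ and $\sigma_2^k$ are conjugate only through reflection, which explains why the reflection is genuinely needed.

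The main obstacle is Step 2: proving that every minimal simple conjugation inside the super summit set is a cyclic permutation, up to $\tau$. I would attack it with a case analysis on the first and last factors of the normal form, using that conjugating by $s$ preserves $\inf$ only if $s$ left-divides the braid after adjusting by $\tau^p$.
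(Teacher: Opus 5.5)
Your easy direction and Step~1 are fine: cycling and decycling are cyclic word moves, including for odd $p$. Your absorption of the reflection when $\inf\ge 1$ is also fine, and it is exactly the paper's argument for the second statement. The gap is Step~2. It carries the whole content of the hard direction, you only sketch it, and the precise claim you formulate there is false.

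You assert that a conjugation by $\sigma_i$ or $\sigma_i\sigma_j$ between super summit elements is a cyclic permutation, composed with $\tau$ only when $p$ is odd, because $s$ must be a prefix of $c$. Take $c=\sigma_1^2\sigma_2^2\sigma_1$ and $s=\sigma_2$. Then $cs=\sigma_1^2\sigma_2\Delta_3=\Delta_3\sigma_2^2\sigma_1$, so $s^{-1}cs=\sigma_1\sigma_2^3\sigma_1$.

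Both braids have $\inf=0$ and $\sup=3$, and both lie in the super summit set. Indeed, by the theorem itself, the conjugates of infimum $\ge 0$ are exactly the rotations of the words $\sigma_1\sigma_1\sigma_2\sigma_2\sigma_1$ and $\sigma_2\sigma_2\sigma_1\sigma_1\sigma_2$. All of these have infimum $0$ and supremum $3$ or $4$. So this is an instance with $p=0$.

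Here $s$ does not left-divide $c$, whose positive word is unique and starts with $\sigma_1$. Moreover, $\sigma_1\sigma_2^3\sigma_1$ is not cyclically equivalent to $c$. The cyclic class of $c$ consists only of the five rotations of $\sigma_1\sigma_1\sigma_2\sigma_2\sigma_1$, each of infimum $0$ with a unique word. Instead, $\sigma_1\sigma_2^3\sigma_1\sim_\circlearrowright\overline{c}=\sigma_2^2\sigma_1^2\sigma_2$. So the reflection is needed already for even $p$ and a conjugator other than $\Delta_3$. A case analysis organized around ``$s$ left-divides $c$'' misses exactly this situation.

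The missing idea is the mechanism in the paper's Lemma~\ref{one_step}. The braid $s^{-1}as$ can be positive because $as$ acquires a factor $\Delta_3$ at its end, not because $s$ divides $a$. For $a=\sigma_iP'\sigma_j\sigma_i$ and $s=\sigma_j$, one gets $s^{-1}as=\overline{\sigma_j\sigma_i^2P'}\sim_\circlearrowright\overline{a}$. Similarly, for $a=\sigma_iP'\sigma_i$ and $s=\sigma_j\sigma_i$, one gets $s^{-1}as=\overline{\sigma_i^2P'}\sim_\circlearrowright\overline{a}$.

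The paper proves the needed statement for every simple $s$ with $s^{-1}as$ positive, not only inside the super summit set. For $\inf(a)\ge 1$ it is immediate: $s^{-1}as=(s^{-1}\Delta_3)\Delta_3^{m-1}Ps\sim_\circlearrowright a$. For $\inf(a)=0$ the paper splits according to how the unique word of $a$ ends. It then chains these steps using the decomposition into simple conjugations through positive braids from \cite[Proposition 4.10]{positive_conjugate}, together with the fact that reflection commutes with cyclic word moves.

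Your super summit framework would go through if Step~2 were restated as ``$s^{-1}cs$ is cyclically equivalent to $c$ or to $\overline{c}$'' and proved by that case analysis. Restricting to super summit elements does not noticeably shorten the analysis, though. A minor point: in Step~3, $\sigma_1^k$ has $\sup=k$, so it is not a $\sup\le 1$ case, and that step is not needed.
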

\begin{remark}
The theorem shows that the conjugacy class of a positive $3$-braid consists of at most two cyclic equivalence classes.
\end{remark}


In addition to the main classification, we provide a proposition (Proposition \ref{prop}) that characterizes the necessary and sufficient conditions under which a conjugacy class in $B_3^+$ coincides with a single cyclic equivalence class.
This result further refines the structural relationship between conjugacy and cyclic permutations in the $3$-strand case.


\section{Preliminaries}

Let $\Delta_n\in B_n$ be the Garside element defined by
\[
\Delta_n := \Pi_{n-1}\Pi_{n-2}\cdots \Pi_1 \in B_n,
\]
where $\Pi_k := \sigma_1\sigma_2\cdots\sigma_k$.

\medskip

We say that $a,b\in B_n^+$ are {\it cyclic equivalent}, denoted by
$a\sim_\circlearrowright b$, if there exists a finite sequence of positive
braids
\[
a=a_0,a_1,\dots,a_m=b
\]
such that for each $i=0,1,\dots,m-1$, the braid $a_i$ admits a positive
word representative $w_1w_2$ for which $w_2w_1$ represents the braid
$a_{i+1}$.  
We call the transition from $a_i$ to $a_{i+1}$ a {\it cyclic word move}.

\medskip

Recall that every braid $a$ admits a standard normal form
\[
a=\Delta_n^m P,
\]
where $m\in\mathbb Z$ and $P$ is a positive braid not containing
$\Delta_n$ as a factor.  
The integer $m$ is called the {\it infimum} of $a$, and is denoted by
$\inf(a)=m$.

\medskip

We define the {\it reflection} of $a$ by
\[
\overline{a}:=\Delta_n^{-1}a\Delta_n=\Delta_n a\Delta_n^{-1},
\qquad a\in B_n,
\]
so that $\overline{\overline{a}}=a$.  
Note that $\overline{\sigma_i}=\sigma_{n-i}$ and
$\overline{ab}=\overline{a}\,\overline{b}$.  
In particular, if $a\in B_n^+$, then $\overline{a}\in B_n^+$.

\begin{lemma}\label{zero_infimum}
All positive $3$-braids with zero infimum are precisely of the forms
\begin{itemize}
\item $\sigma_i^{a_1}\sigma_j^{a_2}\sigma_i^{a_3}\sigma_j^{a_4}\cdots\sigma_i^{a_k}$,
\item $\sigma_i^{a_1}\sigma_j^{a_2}\sigma_i^{a_3}\sigma_j^{a_4}\cdots
\sigma_i^{a_{k-1}}\sigma_j^{a_k}$,
\end{itemize}
where $k\ge 0$, $\{i,j\}=\{1,2\}$, $a_1,a_k\ge1$, and
$a_\ell\ge2$ for $\ell=2,\dots,k-1$.
\end{lemma}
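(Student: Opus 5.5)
We prove two inclusions, working throughout in $B_3^+$ with $\Delta=\Delta_3=\sigma_1\sigma_2\sigma_1=\sigma_2\sigma_1\sigma_2$. Since $B_3^+$ embeds in $B_3$ and the defining relations are homogeneous, word length is well defined, and a positive braid $a$ has zero infimum exactly when $\Delta$ is not a left divisor of $a$. For $n=3$, $\Delta$ is central up to the flip $\sigma_i\mapsto\sigma_{3-i}$. Hence $\Delta$ left-divides $a$ iff it divides $a$ as a factor anywhere: if $a=u\Delta v$, then $a=\Delta\,\overline{u}\,v$.

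\textbf{Every braid of the listed forms has zero infimum.} Let $w$ be one of the listed words, an alternating product of powers of $\sigma_i$ and $\sigma_j$ whose interior exponents are at least $2$. The positive words representing the same braid are exactly those obtained from $w$ by repeatedly applying the relation $\sigma_1\sigma_2\sigma_1\leftrightarrow\sigma_2\sigma_1\sigma_2$. Such a move can only be applied where $w$ contains the subword $\sigma_i\sigma_j\sigma_i$ or $\sigma_j\sigma_i\sigma_j$, that is, where some letter occurs with an exponent equal to $1$ and has neighbours on both sides. The interior exponents are at least $2$, and the exponent-$1$ blocks $a_1$ or $a_k$ sit at the ends with a neighbour on one side only. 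So $w$ contains no such subword, no relation applies, and $w$ is the unique positive word for its braid. In particular no representative of $w$ contains $\Delta$, so $\inf=0$. For a cleaner argument one can instead compute the left normal form: the maximal left simple divisor of $w$ is $\sigma_i$, or $\sigma_i\sigma_j$ when $a_1=1$, and never $\Delta$.

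\textbf{Every braid with zero infimum has one of the listed forms.} Take a positive braid $a$ with $\inf(a)=0$ and any positive word for it. Group consecutive equal letters, so the word reads $\sigma_i^{a_1}\sigma_j^{a_2}\cdots$ with every $a_\ell\ge 1$, alternating letters, and ending in $\sigma_i$ or $\sigma_j$ according to the parity of $k$; this gives the two bullet shapes. Now suppose some interior exponent satisfies $a_\ell=1$ with $2\le\ell\le k-1$. Then the word contains $\sigma_x\sigma_y\sigma_x=\Delta$ for the appropriate pair $\{x,y\}$, and by the remark above $\Delta$ left-divides $a$, contradicting $\inf(a)=0$. Hence every interior exponent is at least $2$. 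The case $k=0$ is the trivial braid, and $a_1,a_k\ge1$ holds by construction.

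The one step needing justification is the claim that $\Delta$ appearing as a factor implies $\Delta$ left-divides $a$. This follows from the relation $\sigma_i\Delta=\Delta\sigma_{3-i}$, which gives $u\Delta=\Delta\overline{u}$ for any positive word $u$.
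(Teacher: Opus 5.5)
Your proof is correct and uses the same mechanism as the paper. The paper states this lemma without a separate proof, but its argument after Remark~\ref{zero_infimum_remark} makes the same observation you do: any application of $\sigma_1\sigma_2\sigma_1=\sigma_2\sigma_1\sigma_2$ needs a $\Delta_3$ subword, which forces positive infimum, so a word whose interior exponents are all at least $2$ is the unique representative of its braid. You also supply two steps the paper leaves implicit: that a $\Delta_3$ factor anywhere yields a left divisor via $u\Delta_3=\Delta_3\overline{u}$, and the converse inclusion, obtained by grouping letters into blocks.
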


\begin{remark}
For any $3$-braid $a$ written in the standard normal form
$\Delta_3^m P$, the factor $P$ is of one of the forms described in
Lemma~\ref{zero_infimum}. Moreover, $m\ge 0$ if and only if
$a\in B_3^+$.
\end{remark}

\begin{remark}\label{zero_infimum_remark}
Any braid $a$ with $\inf(a)=0$ has a unique word representative.

Moreover, if $a$ is either of the first form in
Lemma~\ref{zero_infimum} or of the second form with
$a_1,a_k\ge2$, then $a\sim_\circlearrowright b$ implies that there
exist positive words $w_1,w_2$ such that
\[
a=w_1w_2, \qquad b=w_2w_1 .
\]
That is, a single cyclic word move suffices to obtain $b$ from $a$.
\end{remark}
\begin{proof}
We show that any braid $a$ with $\inf(a)=0$ has a unique word representative.

Let $a$ and $b$ be two word representative of some positive $3$-braid.
Since the only relation in $B_3^+$ is $\sigma_1\sigma_2\sigma_1=\sigma_2\sigma_1\sigma_2$,
there exist a sequence of words $a=w_0, w_1, \cdots, w_m=b$ such that
for any $k=0, 1, \cdots, m-1$, $w_k=uvw$ and $w_{k+1}=uv'w$ for some positive words $u,w$ and
${v,v'}=\{\sigma_1\sigma_2\sigma_1,\sigma_2\sigma_1\sigma_2\}$.

Notice that whenever $m>0$, $a=w_0$ contains a Garside element $v=\Delta_3$.
Thus, $m>0$ implies $inf(a)>0$.
Therefore, any braid $a$ with $\inf(a)=0$ has a unique word representative.
\end{proof}

Throughout this paper, we denote $[a] := \text{cl}(a) \cap B_3^+$, where $\text{cl}(a)$ is the conjugacy class of $a$ in $B_3$. For brevity, we refer to $[a]$ simply as the conjugacy class of $a$.

\section{Conjugacy of positive $3$-braids}
In this section we prove the main theorem. 
We begin by analyzing conjugation by simple elements on positive $3$-braids. 
By \cite[Proposition 4.10]{positive_conjugate}, every conjugation can be decomposed into conjugations by simple elements, so it suffices to understand these elementary cases. 
We illustrate the result with an example and finally characterize those positive $3$-braids whose conjugacy class coincides with their cyclic equivalence class.

\begin{lemma}\label{one_step}
Let
\[
S:=\{1,\sigma_1,\sigma_2,\sigma_1\sigma_2,\sigma_2\sigma_1,\Delta_3\}
\]
be the set of simple $3$-braids. If $a\in B_3^+$, then for each
\[
s\in S_a^+:=\{s\in S \mid \inf(s^{-1}as)\ge 0\},
\]
the braid $s^{-1}as$ is cyclic equivalent either to $a$ or to $\overline{a}$.
\end{lemma}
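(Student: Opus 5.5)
We treat the six simple elements one at a time, writing $a=\Delta_3^m P$ in standard normal form with $m\ge 0$ and $P$ one of the words of Lemma~\ref{zero_infimum}.

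\textbf{Trivial and Garside cases.} For $s=1$ there is nothing to prove. For $s=\Delta_3$ we have $s^{-1}as=\overline{a}$ by definition, so this case is also immediate.

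\textbf{The generators $\sigma_i$.} Let $s=\sigma_i$. If some positive word for $a$ begins with $\sigma_i$, say $a=\sigma_i w$, then $s^{-1}as=w\sigma_i$, which is a single cyclic word move.

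So assume no positive representative of $a$ begins with $\sigma_i$. We first observe that then $m=0$. Indeed, when $m\ge1$ we can write $a=\Delta_3\,\Delta_3^{m-1}P$, and $\Delta_3$ begins with $\sigma_i$ for either $i$. Hence $a=P$ has a unique word by Remark~\ref{zero_infimum_remark}, and that word begins with $\sigma_j$, $j\neq i$.

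The condition $\inf(\sigma_i^{-1}a\sigma_i)\ge0$ says that $\sigma_i^{-1}a\sigma_i$ is positive, i.e.\ $a\sigma_i=\sigma_i c$ for some positive $c$. Since $a\sigma_i=P\sigma_i$ now admits a representative beginning with $\sigma_i$, the word $P\sigma_i$ must contain a $\Delta_3$-factor, and the relation $\sigma_1\sigma_2\sigma_1=\sigma_2\sigma_1\sigma_2$ must be applied repeatedly until a $\sigma_i$ is brought to the front. We therefore track how $\sigma_i$ moves leftward through $P\sigma_i$ via the identities $\sigma_j\Delta_3=\Delta_3\sigma_i$ and $\sigma_i\sigma_j\sigma_i=\Delta_3$. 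Using the alternating-exponent structure of Lemma~\ref{zero_infimum}, this can only succeed in restricted shapes, for instance $P=\sigma_j\sigma_i^{a_2}\cdots$ with the other interior exponents of a special form. We then compute $c$ explicitly and exhibit it as a cyclic rotation of either $P$ or $\overline{P}$.

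A general mechanism may settle this step cleanly. Writing $P=\sigma_j Q$, one has $\sigma_i^{-1}\sigma_j Q\sigma_i=\sigma_i^{-1}\sigma_j^{-1}\,(\sigma_j\sigma_j Q\sigma_i)$, and $\Delta_3$-factors can be absorbed via $\sigma_i^{-1}\sigma_j^{-1}\Delta_3=\sigma_i$. Thus conjugation by $\sigma_i$ equals conjugation by $\sigma_j\sigma_i$ composed with conjugation by $\Delta_3$. This reduces the present case to the length-two case combined with reflection.

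\textbf{The length-two elements $\sigma_1\sigma_2$ and $\sigma_2\sigma_1$.} Take $s=\sigma_i\sigma_j$. If $a$ has a representative beginning with $s$, then $s^{-1}as$ is a cyclic rotation of $a$. If $a$ has a representative beginning with $\sigma_i$ only, we apply the generator case twice, provided each intermediate step stays positive. Otherwise we use $\sigma_i\sigma_j=\Delta_3\sigma_i^{-1}$, which up to the standard identifications gives $s^{-1}as=\overline{\sigma_i a\sigma_i^{-1}}$. Positivity of this braid means $a$ ends with $\sigma_i$, so $\sigma_i a\sigma_i^{-1}$ is a cyclic rotation of $a$. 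The result is then the reflection of a braid cyclic equivalent to $a$. Since a cyclic word move on $x$ reflects to a cyclic word move on $\overline{x}$, this braid is cyclic equivalent to $\overline{a}$.

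\textbf{Main obstacle.} The hardest step is the generator case $m=0$ with $P$ not beginning with $\sigma_i$. The plan is to handle it by the end-rewriting trick above, namely conjugating by $\sigma_i^{-1}$ from the right side using the last letter of $P$ together with $\Delta_3$. Where that trick does not apply, we fall back on an exhaustive analysis of the forms in Lemma~\ref{zero_infimum}.
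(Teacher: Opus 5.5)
There is a genuine gap, and it is the step you flag yourself: $m=0$, $a=P$ beginning with $\sigma_j$, and $s=\sigma_i$. What you give there is a plan rather than an argument, and the proposed ``general mechanism'' fails.

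\begin{itemize}
\item The identity is wrong: $\sigma_i^{-1}\sigma_j^{-1}\Delta_3=\sigma_j$, not $\sigma_i$.
\item The claimed reduction is also wrong. What is true is $\sigma_i=\Delta_3(\sigma_j\sigma_i)^{-1}$, so
$\sigma_i^{-1}a\sigma_i=(\sigma_j\sigma_i)\,\overline{a}\,(\sigma_j\sigma_i)^{-1}=\overline{\sigma_j^{-1}\overline{a}\,\sigma_j}$.
This is conjugation by the \emph{inverse} of $\sigma_j\sigma_i$, which the lemma does not cover. Rewriting it only gives conjugation of $\overline{a}$ (which begins with $\sigma_i$, not $\sigma_j$) by $\sigma_j$, i.e.\ the same hard case after reflection.
\item So nothing is reduced. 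Moreover, your length-two case itself calls on the generator case, so sending the generator case back to the length-two case would be circular.
\item Your guess that the admissible $P$ are constrained in their interior exponents is off: the real constraint is on the tail of $P$.
\end{itemize}

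The missing idea is short, and you were one step from it.

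\begin{itemize}
\item A positive word with no subword $\sigma_1\sigma_2\sigma_1$ or $\sigma_2\sigma_1\sigma_2$ admits no relation. It is therefore the unique representative of its braid, which has infimum $0$; this is the argument behind Remark~\ref{zero_infimum_remark}.
\item You correctly noted that $P\sigma_i$ must have positive infimum. Since the word $P$ contains no $\Delta_3$-subword, the only possible one in the word $P\sigma_i$ is its last three letters.
\item Hence $\sigma_i\in S_a^+$ forces $P=\sigma_jQ\sigma_i\sigma_j$ with $Q$ positive. Then $P\sigma_i=\sigma_jQ\Delta_3=\Delta_3\overline{\sigma_jQ}$.
\item So $\sigma_i^{-1}P\sigma_i=\sigma_j\sigma_i\,\overline{\sigma_jQ}=\overline{\sigma_i\sigma_j\cdot\sigma_jQ}$, and $\sigma_i\sigma_j\cdot\sigma_jQ\sim_\circlearrowright\sigma_jQ\cdot\sigma_i\sigma_j=P$.
\end{itemize}
This is the paper's case (3) with $i,j$ interchanged, and no exhaustive analysis of the forms in Lemma~\ref{zero_infimum} is needed.

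Separately, your length-two case has an index slip. From $s=\sigma_i\sigma_j=\Delta_3\sigma_i^{-1}$ one gets $s^{-1}as=\sigma_i\overline{a}\sigma_i^{-1}=\overline{\sigma_ja\sigma_j^{-1}}$, not $\overline{\sigma_ia\sigma_i^{-1}}$.

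Positivity then forces $a$ to end in $\sigma_j$, by the same subword argument: $a$ begins with $\sigma_j$, so the word $\sigma_ja$ cannot acquire a $\Delta_3$ at its left end. With that correction, your conclusion $s^{-1}as\sim_\circlearrowright\overline{a}$ holds.

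The rest of your case analysis is fine once the generator case is settled. This includes the implicit handling of $m\ge1$, where $a$ begins with $\Delta_3$ and hence with every simple element.
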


\begin{proof}
Write $a$ in standard normal form as
$a=\Delta_3^m P$,
where $m=\inf(a)\ge 0$ and $P$ is of one of the forms described in Lemma~\ref{zero_infimum}.
We divide the possibilities for $P$ into the following cases:
\begin{enumerate}
    \item\label{P1} $P=1$;
    \item\label{P2} $P=\sigma_i P' \sigma_i^2$ or $P=\sigma_i$;
    \item\label{P3} $P=\sigma_i P' \sigma_j\sigma_i$;
    \item\label{P4} $P=\sigma_i P' \sigma_j$
\end{enumerate}
for some $P'\in B_3^+$ and $\{i,j\}=\{1,2\}$.

Assume first that $m\ge 1$. For any $s\in S_a^+$, we have
\[
s^{-1}as=s^{-1}\Delta_3\Delta_3^{m-1}Ps.
\]
Set $s':=s^{-1}\Delta_3\in B_3^+$. Then
\[
s^{-1}as=s'\Delta_3^{m-1}Ps \sim_\circlearrowright ss'\Delta_3^{m-1}P
=\Delta_3^mP=a.
\]

It remains to consider the case $m=0$, so that $a=P$.

In case~\ref{P1}, the statement is trivial.
If $s=1$ or $s=\Delta_3$, then $s^{-1}as$ is equal to either $a$ or $\overline{a}$.

Suppose next that $s=\sigma_i$.
In cases~\ref{P2}, \ref{P3}, and~\ref{P4}, the braid $\sigma_i^{-1}a\sigma_i$ is cyclic equivalent to $a$.

Now suppose that $s=\sigma_j$. In cases~\ref{P2} and~\ref{P4}, one has
$\inf(s^{-1}as)=-1$,
hence $\sigma_j\notin S_a^+$. In case~\ref{P3},
\[
s^{-1}as
=(\Delta_3^{-1}\sigma_j\sigma_i)\sigma_i P' \sigma_j\sigma_i\sigma_j
=\Delta_3^{-1}\sigma_j\sigma_i^2 P' \Delta_3
=\overline{\sigma_j\sigma_i^2 P'}
\sim_\circlearrowright \overline{\sigma_i P' \sigma_j\sigma_i}
=\overline{a}.
\]

Suppose now that $s=\sigma_i\sigma_j$. In cases~\ref{P2}, \ref{P3}, and~\ref{P4}, we may write $P=\sigma_iP''$ for some positive braid $P''$. Then
\[
s^{-1}as=\sigma_j^{-1}(P''\sigma_i)\sigma_j,
\]
which reduces to the previous case.

Finally, suppose that $s=\sigma_j\sigma_i$. In case~\ref{P4},
\[
s^{-1}as=\Delta_3^{-1}\sigma_i(\sigma_iP'\sigma_j)\sigma_j\sigma_i
\]
has infimum $-1$, hence $s\notin S_a^+$. In cases~\ref{P2} and~\ref{P3},
\[
s^{-1}as
=s^{-1}(\sigma_iP'\sigma_i)s
=\Delta_3^{-1}\sigma_i(\sigma_iP'\sigma_i)\sigma_j\sigma_i
=\Delta_3^{-1}\sigma_i^2P'\Delta_3
=\overline{\sigma_i^2P'}
\sim_\circlearrowright \overline{\sigma_iP'\sigma_i}
=\overline{a}.
\]
This completes the proof.
\end{proof}

The previous lemma allows us to reduce conjugacy to cyclic equivalence and reflection.

\begin{theorem}\label{main}
Let $a,b\in B_3^+$. Then $a$ and $b$ are conjugate if and only if
$a\sim_\circlearrowright b$ or $a\sim_\circlearrowright \overline{b}$.
In other words,
\[
[a]=[a]_\circlearrowright\cup[\overline{a}]_\circlearrowright.
\]
Moreover, if $\inf(a)\ge 1$, then $a$ and $b$ are conjugate if and only if they are cyclic equivalent.
Equivalently,
\[
[a]=[a]_\circlearrowright,
\]
if $\inf(a)\ge 1$.
\end{theorem}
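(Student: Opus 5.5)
The proof has two directions; the reverse one is immediate.

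\emph{Reverse direction.} A cyclic word move $w_1w_2\mapsto w_2w_1$ is conjugation by $w_1$, since $w_2w_1=w_1^{-1}(w_1w_2)w_1$. Hence $a\sim_\circlearrowright b$ implies that $a$ and $b$ are conjugate. Also $\overline{b}=\Delta_3^{-1}b\Delta_3$ is conjugate to $b$. So $a\sim_\circlearrowright\overline{b}$ also implies that $a$ and $b$ are conjugate.

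\emph{Forward direction.} Suppose $b=x^{-1}ax$ with $a,b\in B_3^+$. By \cite[Proposition 4.10]{positive_conjugate}, there is a chain
\[
a=c_0,\ c_1,\ \dots,\ c_r=b
\]
of positive braids with $c_{k+1}=s_k^{-1}c_ks_k$ for simple elements $s_k\in S$. Since each $c_{k+1}$ is positive, $s_k\in S_{c_k}^+$. Lemma~\ref{one_step} then gives $c_{k+1}\sim_\circlearrowright c_k$ or $c_{k+1}\sim_\circlearrowright\overline{c_k}$.

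To chain these steps I use two observations.
\begin{itemize}
\item Reflection respects cyclic equivalence: if $c\sim_\circlearrowright d$ then $\overline{c}\sim_\circlearrowright\overline{d}$. This holds because $\overline{w_1w_2}=\overline{w_1}\,\overline{w_2}$ and reflection sends positive words to positive words.
\item Reflection is an involution: $\overline{\overline{c}}=c$.
\end{itemize}
By induction on $k$, each $c_k$ is cyclic equivalent to $a$ or to $\overline{a}$. If $c_k\sim_\circlearrowright a$ or $c_k\sim_\circlearrowright\overline{a}$, then $\overline{c_k}$ is cyclic equivalent to $\overline{a}$ or to $a$ respectively. Hence $c_{k+1}$ is cyclic equivalent to $a$ or $\overline{a}$. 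At $k=r$ this gives $b\sim_\circlearrowright a$ or $b\sim_\circlearrowright\overline{a}$. The second case is equivalent to $a\sim_\circlearrowright\overline{b}$, by applying reflection. This proves $[a]=[a]_\circlearrowright\cup[\overline{a}]_\circlearrowright$.

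\emph{The case $\inf(a)\ge1$.} By the first part, it suffices to show $\overline{a}\sim_\circlearrowright a$. Write $a=\Delta_3^mP$ with $m\ge1$. Then
\[
\overline{a}=\Delta_3^{-1}\Delta_3^mP\Delta_3=\Delta_3^{m-1}P\Delta_3,
\]
which is obtained from the word $\Delta_3\cdot(\Delta_3^{m-1}P)$ by a single cyclic word move. So $\overline{a}\sim_\circlearrowright a$.

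Cyclic equivalence is an equivalence relation: it is symmetric because the reverse move cuts $w_2w_1$ as $w_2\cdot w_1$, and it is transitive by concatenating sequences. Therefore $[\overline{a}]_\circlearrowright=[a]_\circlearrowright$, and so $[a]=[a]_\circlearrowright$.

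\emph{Main obstacle.} The only delicate point is the decomposition step. Proposition 4.10 of \cite{positive_conjugate} must supply intermediate conjugates that are all positive, so that each $s_k\in S_{c_k}^+$ and Lemma~\ref{one_step} applies at every step. If the cited result only produced conjugates with infimum at least $\inf(a)$, I would use the convexity of the set of conjugators into $B_3^+$ to refine the chain into steps by simple elements that stay within $B_3^+$. Everything else is bookkeeping with the involution.
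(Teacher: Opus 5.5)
Your proof is correct and follows the paper's argument. You decompose the conjugation into simple-element steps via \cite[Proposition 4.10]{positive_conjugate} and apply Lemma~\ref{one_step} at each step; for $\inf(a)\ge 1$ you cycle a single $\Delta_3$ to get $a\sim_\circlearrowright\overline{a}$, and you also spell out the reverse direction and the reflection-compatible chaining induction that the paper leaves implicit.
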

\begin{proof}
By \cite[Proposition 4.10]{positive_conjugate}, Lemma~\ref{one_step} implies that $a$ and $b$ are conjugate if and only if
$a\sim_\circlearrowright b$ or $a\sim_\circlearrowright \overline{b}$.

It remains to prove the final statement. Write
$a=\Delta_3^mP$
in standard normal form, where $m\ge 1$ and $P$ is a positive word. Then
\[
a=\Delta_3\Delta_3^{m-1}P
=\overline{\Delta_3^{m-1}P} \Delta_3
\sim_\circlearrowright
\Delta_3 \overline{\Delta_3^{m-1}P}
=\overline{\Delta_3^mP}
=\overline{a}.
\]
Hence, if $b\sim_\circlearrowright \overline{a}$, then also $b\sim_\circlearrowright a$. Therefore, when $\inf(a)\ge 1$, the braids $a$ and $b$ are conjugate if and only if $a\sim_\circlearrowright b$.
\end{proof}

\begin{remark}
The statement of Theorem~\ref{main} does not extend to $n>3$ in general. For example, when $n=4$, the braids $\sigma_1$ and $\sigma_2$ have the same closed braid type in the solid torus, but they are neither cyclic equivalent nor cyclic equivalent to each other's reflections.
\end{remark}

\begin{example}\label{length4_example}
\begin{align*}
[\sigma_1^4]
&=[\sigma_1^4]_\circlearrowright\cup [\sigma_2^4]_\circlearrowright=\{\sigma_1^4,\sigma_2^4\};\\
[\sigma_1^2\sigma_2^2]
&=[\sigma_1^2\sigma_2^2]_\circlearrowright\cup [\sigma_2^2\sigma_1^2]_\circlearrowright\\
&=[\sigma_1^2\sigma_2^2]_\circlearrowright\\
&=\{\sigma_1^2\sigma_2^2,\sigma_1\sigma_2^2\sigma_1,\sigma_2^2\sigma_1^2,\sigma_2\sigma_1^2\sigma_2\};\\
[\sigma_1^3\sigma_2]
&=[\sigma_1^3\sigma_2]_\circlearrowright\cup [\sigma_2^3\sigma_1]_\circlearrowright
\quad(\text{use the fact }\sigma_1^2\sigma_2\sigma_1=(\sigma_1\sigma_2)^2=\sigma_2\sigma_1\sigma_2^2)\\
&=\{\sigma_1^3\sigma_2,(\sigma_1\sigma_2)^2,(\sigma_2\sigma_1)^2,\sigma_2\sigma_1^3,\sigma_2^3\sigma_1,\sigma_1\sigma_2^3\}\\
&=[(\sigma_1\sigma_2)^2]_\circlearrowright
\quad(\text{since }\inf((\sigma_1\sigma_2)^2)=1)\\
&=[\sigma_1^3\sigma_2]_\circlearrowright.
\end{align*}

These are all conjugacy classes of positive $3$-braids with length $4$.
\end{example}

We now turn to the question of when the conjugacy class of a positive $3$-braid coincides with its cyclic equivalence class.

Example~\ref{length4_example} shows that the conjugacy class of a positive $3$-braid may coincide with its cyclic equivalence class even when the infimum is zero, such as $\sigma_1^2\sigma_2^2$ and $\sigma_1^3\sigma_2$.
The following lemma and proposition give a necessary and sufficient condition for this phenomenon.

We say that the expression $(c\overline{c})^{\ell_1}$ for some positive word $c$ is {\bf minimal}
if $c \overline{c}\neq (d \overline{d})^{\ell_2}$ for any positive word $d$ and $\ell_2\geq 2$.
Note that a minimal expression exists for any word of the form $(c\overline{c})^\ell$.

\begin{lemma}
Let $a$ be a positive braid in $B_3$ with $\inf(a)=0$.
Suppose 
$a$ is either the first form in Lemma \ref{zero_infimum} or
the second form in Lemma \ref{zero_infimum} with $a_1, a_k\geq 2$
and satisfies
$a\sim_\circlearrowright \overline{a}$,
then $a=(c \overline{c})^\ell$ for some positive word $c$ and $\ell\in \mathbb{N}$
and therefore $a$ has a minimal expression.

Furthermore,
if $a=w_1w_2$, $\overline{a}=w_2w_1$ and $a=(c \overline{c})^\ell$ is a minimal expression,
then $w_1$ must be $(c \overline{c})^{\ell_3}c$ for some $\ell_3\geq 0$.
\end{lemma}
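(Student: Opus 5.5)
Since $\inf(a)=0$, Remark~\ref{zero_infimum_remark} says $a$ has a unique word representative, and so does $\overline{a}$, since reflection preserves infimum. The same remark tells us that, under the stated hypotheses on the form of $a$, the cyclic equivalence $a\sim_\circlearrowright\overline{a}$ is realized by a single cyclic word move. So there are positive words with $a=w_1w_2$ and $\overline{a}=w_2w_1$. By uniqueness of representatives these are identities of words, not just of braids. Let $\phi$ denote the letter map $\sigma_1\leftrightarrow\sigma_2$, and write $n=|a|$ and $p=|w_1|$. The problem then becomes purely combinatorial: the word $\overline{a}=\phi(a)$ is the cyclic rotation of $a$ by $p$ positions.

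Let $\rho$ be rotation by $p$ on the cyclic word $a$, indexed by $\mathbb{Z}/n$. The hypothesis reads $a_{t+p}=\phi(a_t)$ for all $t$. Let $d=\gcd(p,n)$. Iterating gives $a_{t+kp}=\phi^k(a_t)$. Choosing $k=n/d$, so that $kp\equiv 0$, we get $\phi^{n/d}=\mathrm{id}$ on the letters occurring, hence $n/d$ is even. Since $p$ generates the subgroup $d\mathbb{Z}/n$, some odd multiple $k$ satisfies $kp\equiv d \pmod n$. It is odd because $p/d$ is coprime to $n/d$, which is even, so $p/d$ is odd. Therefore $a_{t+d}=\phi(a_t)$.

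Set $c=a_1\cdots a_d$. Then $a=c\,\phi(c)\,c\,\phi(c)\cdots=(c\overline{c})^\ell$ with $\ell=n/(2d)$. This proves the first claim. Existence of a minimal expression is the remark already made in the paper: take $c$ of minimal length.

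For the second claim, start from a minimal expression $a=(c\overline{c})^\ell$ with $|c|=e$. The set of rotations $r$ with $a_{t+r}=\phi(a_t)$ for all $t$ is a coset of the stabilizer $H=\{r : a_{t+r}=a_t\}$, a subgroup $g\mathbb{Z}/n$. The shift $e$ lies in this coset, so the coset is $e+g\mathbb{Z}$. By the argument above, $p$ is also in this coset, and so is the $d$ derived from it, giving a decomposition with block length $d$.

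Minimality rules out periods finer than $2e$. If $g<2e$, then the antiperiod structure produces a block $c'$ of length $g/2$ with $c\overline{c}=(c'\overline{c'})^{2e/g}$. Here one needs $2e/g\ge2$ and $g/2$ to be an integer; this follows because $g\mid 2e$ and because the odd-multiple argument applies to any element of the coset. That contradicts minimality. Hence $g=2e$ and $p\equiv e\pmod{2e}$, that is, $p=2e\ell_3+e$, giving $w_1=(c\overline{c})^{\ell_3}c$.

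The main obstacle is this final step: checking carefully that the stabilizer and antistabilizer are as described, that $g\mid 2e$, and that a strictly smaller $g$ really yields an expression $(d\overline{d})^{\ell_2}$ with $\ell_2\ge2$. I would handle it through the subgroup-coset argument above, using $\gcd$ with the parity observation.
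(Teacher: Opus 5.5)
Your proof is correct, and it takes a genuinely different route from the paper.

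\textbf{Shared starting point.} Both arguments begin with Remark~\ref{zero_infimum_remark}, which gives a single cyclic move and unique word representatives. This turns the hypothesis into the word identity $w_2w_1=\phi(w_1w_2)$.

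\textbf{The paper's route.} The paper inducts on length, with a computer-checked base case of length $\le 18$. It runs a Euclidean-algorithm, Lyndon--Sch\"utzenberger-style case analysis on $w_2w_1=w_1\overline{w_2}$. It treats the second claim by a separate prefix comparison leading to $w_3w_4=\overline{w_4}w_3$.

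\textbf{Your route.} You read the hypothesis as saying that the letter swap $\phi$ is realized by rotation by $p$ on $\mathbb{Z}/n$, and everything follows from the subgroup structure there. The parity of $n/\gcd(p,n)$ produces the antiperiod $d=\gcd(p,n)$. The antiperiods form a coset $e+H$ of the period subgroup $H=g\mathbb{Z}/n$. Minimality of $(c\overline{c})^\ell$ is exactly the condition $g=2e$.

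\textbf{What each buys.} Your argument is non-inductive and needs no machine verification. It never has to check that the shorter words arising in an induction still satisfy the lemma's hypotheses. It also explains conceptually why $|w_1|$ must lie in $e+2e\mathbb{Z}$. The paper's route stays within explicit word manipulations and records explicit exponents, at the price of the case split and the computer-checked base case.

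\textbf{Spots to tighten.} There are two places in your write-up to fix.
\begin{enumerate}
\item The reason $k$ is odd is that $k\,(p/d)\equiv 1 \pmod{n/d}$ with $n/d$ even. The oddness of $p/d$ is a by-product of this, not the reason.
\item You should give the one-line proof that $g/2\in A$. Since $e\in A$ and $A\cap H=\emptyset$ (because $\phi$ fixes no letter), $g\nmid e$, while $g\mid 2e$. So $g$ is even and $e/(g/2)$ is odd, hence $g/2\equiv e\pmod g$ and $g/2\in e+H=A$. If $g<2e$, this gives $c\overline{c}=(c'\overline{c'})^{e/(g/2)}$ with exponent at least $3$, contradicting minimality.
\end{enumerate}
Finally, the detour through ``the $d$ derived from $p$'' is not needed for the second claim.
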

\begin{proof}
We argue by induction on the length $l(a)$ of the braid.
For braids of length at most $18$, the statement can be verified by computer.
(The number $18$ is just a good enough number for programming.)

Now, suppose $l(a)\geq \ell > 18$ and assuming this Lemma is true for all length less then $\ell$.

Let $a=w_2'w_1$ such that $\overline{a}=w_1w_2'$.
Without loss of generality, assuming $l(w_1)\leq l(w_2')$.
Then $w_2'w_1=\overline{\overline{a}}=\overline{w_1}\overline{w_2'}$.
So we may write $w_2'=\overline{w_1}w_2$ for some $w_2$ with $l(w_2)\geq 0$.
Then we have
\begin{equation}\label{word_induction_eq}
w_2w_1=w_1\overline{w_2}.
\end{equation}

We distinguish four cases according to the lengths of $w_1$ and $w_2$,
$l(w_2)=0$, $0<l(w_1)<l(w_2)$, $l(w_1)>l(w_2)>0$ and $l(w_1)=l(w_2)$.

For the case $l(w_2)=0$, $a=\overline{w_1}w_1 = c\overline{c}$ by taking $c=\overline{w_1}$.

For the case $0<l(w_1)<l(w_2)$,
comparing the prefixes of length $l(w_2)$ on both sides of equation (\ref{word_induction_eq}),
we may write $w_2 = w_1w_3$ with $w_3$ being a prefix of $\overline{w_2}$.
Then we have $w_3w_1=\overline{w_1}\overline{w_3}$.
This means that $w_3w_1=\overline{w_1}\overline{w_3}\sim_\circlearrowright \overline{w_3}\overline{w_1}=\overline{w_3w_1}$.
By induction hypothesis, $w_3w_1$ has a minimal expression $(c\overline{c})^{\ell_1}$ for $\ell_1\geq 1$.
Moreover, also by induction hypothesis, $w_3 = (c\overline{c})^{\ell_2}c$ and $w_1 = \overline{c}(c\overline{c})^{\ell_1-\ell_2-1}$ for some $\ell_2\geq 0$.
Thus, $w_2=\overline{c}(c\overline{c})^{\ell_1-1}c$ and $a=\overline{w_1}w_2w_1=(c\overline{c})^{3\ell_1-2\ell_2-1}$,
where $3\ell_1-2\ell_2-1\geq 2$.

For the case $l(w_1)>l(w_2)>0$, write $w_1 = w_2w_{1,1}$.
So $w_2w_{1,1}=w_{1,1}\overline{w_2}$.
If $l(w_{1,1})>l(w_2)$, write $w_1 = w_2w_{1,1} = w_2^2w_{1,2}$.
Keep doing this until, $l(w_{1,\ell_3})<l(w_2)$,
so that $w_1 = w_2^{\ell_3}w_{1,\ell_3}$ and $w_2w_{1,\ell_3}=w_{1,\ell_3}\overline{w_2}$
for $\ell_3\geq 1$.
Then it is similar to the previous case. Applying induction hypothesis again, we have
$w_2=\overline{c}(c\overline{c})^{\ell_1-1}c$ and $w_{1,\ell_3}=\overline{c}(c\overline{c})^{\ell_1-\ell_2-1}$.
So $w_1=w_2^{\ell_3} w_{1,\ell_3}
=\overline{c}(c\overline{c})^{\ell_1\ell_3-1}c\overline{c}(c\overline{c})^{\ell_1-\ell_2-1}
=\overline{c}(c\overline{c})^{\ell_1\ell_3-1 + \ell_1-\ell_2}
$.
Hence, $a=\overline{w_1}w_2w_1=(c\overline{c})^{2\ell_1\ell_3 + 3\ell_1-2\ell_2-1}$ for $2\ell_1\ell_3 + 3\ell_1-2\ell_2-1\geq 4$.

The case $l(w_1)=l(w_2)$ is impossible, since it would imply $w_1=\overline{w_1}$, which cannot occur in $B_3^+$.

To complete the induction step,
it remain to show the second statement of the lemma.
Let $a=(c \overline{c})^{\ell_4}$ be a minimal expression.
Write $c=w_3w_4$, so that either
\[
a=(c \overline{c})^{\ell_5}w_3w_4 \overline{c}(c \overline{c})^{\ell_4-\ell_5-1}
\text{ and }
\overline{a}=w_4 \overline{c}(c \overline{c})^{\ell_4-\ell_5-1}(c \overline{c})^{\ell_5}w_3\]
or
\[
a=(c \overline{c})^{\ell_5}c \overline{w_3w_4}(c \overline{c})^{\ell_4-\ell_5-1}
\text{ and }
\overline{a}=\overline{w_4}(c \overline{c})^{\ell_4-\ell_5-1}(c \overline{c})^{\ell_5}c\overline{w_3}.\]
We need to show either $l(w_4)=0$ in first case or $l(w_3)=0$ in second case.
The second case is similar to the first, so we will only prove the first case.

In the first case,
$
(c \overline{c})^{\ell_5}w_3w_4 \overline{c}(c \overline{c})^{\ell_4-\ell_5-1}
=
\overline{\overline{a}}
=
\overline{w_4} c(\overline{c}c)^{\ell_4-1}\overline{w_3}
$.
By comparing the prefixes of length $l(c)$ on both sides implies $w_3w_4=c=\overline{w_4}w_3$.
This is analogous to the Equation \ref{word_induction_eq} with word order reversing.
With similar discussion, $4$ cases above fail except the first one $l(w_4)=0$, otherwise $c \overline{c}$ can be written in to $(d\overline{d})^{\ell_6}$ for some $d$ and $\ell_6\geq 2$. Thus, induction step is proved.
\end{proof}

\begin{proposition}\label{prop}
Suppose $a$ and $\overline{a}$ are positive braids that are conjugate in $B_3$. 
Then $a$ and $\overline{a}$ are cyclic equivalent if and only if at least one of the following conditions fails:
\begin{enumerate}
\item\label{propP1:label}
$\inf(a)=0$;
\item\label{propP2:label}
$a$ is either of the first form in Lemma \ref{zero_infimum} or
 the second form in Lemma \ref{zero_infimum} with $a_1, a_k\geq 2$;
\item\label{propP3:label}
$a\neq(c \overline{c})^\ell$ for any positive word $c$ and $\ell\in \mathbb{N}$.
\end{enumerate}
\end{proposition}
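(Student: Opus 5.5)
We prove the contrapositive form: $a\sim_\circlearrowright\overline{a}$ holds exactly when one of \eqref{propP1:label}--\eqref{propP3:label} fails. The argument treats each condition in turn.

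\textbf{Step 1 (condition \eqref{propP1:label} fails).} Since $a\in B_3^+$, failure of \eqref{propP1:label} means $\inf(a)\ge1$. The final computation in the proof of Theorem~\ref{main} writes $a=\overline{\Delta_3^{m-1}P}\,\Delta_3$ and applies one cyclic word move, which gives $a\sim_\circlearrowright\overline{a}$ directly.

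\textbf{Step 2 (condition \eqref{propP3:label} fails).} Then $a=(c\overline{c})^\ell$ for some positive word $c$. Taking $w_1=c$ and $w_2=\overline{c}(c\overline{c})^{\ell-1}$, the cyclic word move gives
\[
w_2w_1=\overline{c}(c\overline{c})^{\ell-1}c=(\overline{c}c)^\ell=\overline{(c\overline{c})^\ell}=\overline{a},
\]
using $\overline{\overline{c}}=c$.

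\textbf{Step 3 (\eqref{propP1:label} and \eqref{propP3:label} hold, \eqref{propP2:label} fails).} Now $\inf(a)=0$ and $a$ is of the second form with $a_1=1$ or $a_k=1$. We move a boundary letter around by a cyclic word move, and the two subcases are handled differently.
\begin{itemize}
\item If $a=\sigma_i P\sigma_j$ with $a_1=1$ (or symmetrically $a_k=1$), the move gives $\sigma_jP\sigma_i$ or similar. Its neighbouring exponents merge, so after a few moves we should reach a word of the form $\sigma_i^{x}\cdots\sigma_j\sigma_i$ or $\sigma_j\sigma_i^2\cdots$.
\item If both $a_1=1$ and $a_k=1$, then $\sigma_j\sigma_iP'\sigma_j\sigma_i$-type words appear after rotating. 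Because $\sigma_i\sigma_j\sigma_i=\Delta_3$, some cyclic rotation contains a $\Delta_3$ factor, i.e. has positive infimum.
\end{itemize}
I would make this concrete: if $a_1=1$, write $a=\sigma_i\sigma_j^{a_2}\cdots\sigma_i^{a_{k-1}}\sigma_j^{a_k}$. Moving the final $\sigma_j$ to the front gives a word $\sigma_j\sigma_i\sigma_j^{a_2}\cdots$; when $a_2\ge1$ this contains $\sigma_j\sigma_i\sigma_j=\Delta_3$. Hence $a\sim_\circlearrowright b$ with $\inf(b)\ge1$. By Step 1, $b\sim_\circlearrowright\overline{b}$. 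Since cyclic word moves commute with reflection, $\overline{a}\sim_\circlearrowright\overline{b}$, and therefore $a\sim_\circlearrowright\overline{a}$. The degenerate short cases ($k\le 2$, e.g.\ $a=\sigma_i\sigma_j$) can be checked by hand; for instance $\sigma_1\sigma_2\sim_\circlearrowright\sigma_2\sigma_1=\overline{\sigma_1\sigma_2}$.

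\textbf{Step 4 (all three conditions hold).} We must show $a\not\sim_\circlearrowright\overline{a}$. Suppose for contradiction that $a\sim_\circlearrowright\overline{a}$. By \eqref{propP1:label}, \eqref{propP2:label} and Remark~\ref{zero_infimum_remark}, a single move suffices: $a=w_1w_2$ and $\overline{a}=w_2w_1$. The preceding Lemma then yields $a=(c\overline{c})^\ell$, contradicting \eqref{propP3:label}.

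The main obstacle is Step 3. There one must check carefully that every second-form word with a boundary exponent $1$ really rotates to a word containing $\Delta_3$, including the edge cases where $k$ is small.
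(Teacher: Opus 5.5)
Your proposal is correct and follows essentially the same route as the paper: failure of \eqref{propP1:label} via the $\Delta_3$-rotation from the proof of Theorem~\ref{main}; failure of \eqref{propP2:label} by rotating a boundary letter to create a $\Delta_3$ factor, then using Theorem~\ref{main} and the fact that reflection respects cyclic word moves; failure of \eqref{propP3:label} via $(c\overline{c})^\ell\sim_\circlearrowright(\overline{c}c)^\ell$; and the preceding lemma for the remaining direction. Your separate check of the edge case $a=\sigma_i\sigma_j$, where the paper's factorization $\sigma_i\sigma_jP'\sigma_j$ with positive $P'$ is unavailable (though \eqref{propP3:label} fails there anyway), is a small tidy-up, and invoking Remark~\ref{zero_infimum_remark} in Step 4 is harmless but unnecessary, since the lemma only assumes $a\sim_\circlearrowright\overline{a}$.
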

\begin{proof}
Suppose that $a\sim_\circlearrowright \overline{a}$ and $a$ satisfies (\ref{propP1:label}) and (\ref{propP2:label}).
Then the previous lemma implies $a=(c \overline{c})^\ell$ for some positive word $c$ and $\ell\in \mathbb{N}$.
That is, (\ref{propP3:label}) fails.

Conversely,
if (\ref{propP1:label}) fails, Theorem~\ref{main} shows that $a\sim_\circlearrowright \overline{a}$;
If (\ref{propP2:label}) fails, $a=\sigma_i\sigma_j P' \sigma_j$ or $\sigma_i P' \sigma_i\sigma_j$ for some $P'\in B_3^+$ by assumption,
then
\[a\sim_\circlearrowright \Delta_3 P',\]
since $\Delta_3=\sigma_i\sigma_j\sigma_i=\sigma_j\sigma_i\sigma_j$.
Thus, Theorem~\ref{main} shows that
\[a\sim_\circlearrowright \Delta_3 P' \sim_\circlearrowright \overline{\Delta_3 P'}\sim_\circlearrowright \overline{a};\]
Finally, if (\ref{propP3:label}) fails, then
\[a=(c \overline{c})^\ell \sim_\circlearrowright \overline{c}(c\overline{c})^{\ell-1}c=(\overline{c} c)^\ell = \overline{a}.\]
This completes the proof.
\end{proof}

\noindent
\phantomsection\addcontentsline{toc}{section}{Acknowledgements}
\textbf{Acknowledgements.}
The authors would like to thank Hongtaek Jung for a careful reading of the manuscript and for many valuable suggestions that helped improve the clarity of the presentation.
We also thank
Chun-Wei Lee
for their assistance and their useful comments.
The work of Y.-H. Suen was supported by the Ministry of Education Republic of China (Taiwan) - Yushan Fellow Program H114-A116B.

\vspace{.1in}
\end{document}